\newtheorem{thm}{Theorem}[section]
\newtheorem{lem}[thm]{Lemma}
\theoremstyle{definition}
\newtheorem{defn}[thm]{Definition}
\theoremstyle{remark}
\newtheorem{rem}[thm]{Remark}
\newcommand*{\braces}[1]{\left\lbrace #1 \right\rbrace}
\newcommand*{\setof}{\braces}
\newcommand*{\parens}[1]{\left( #1 \right)}
\newcommand*{\deftobe}{\mathrel{\coloneqq}}
\newcommand{\bndr}{\partial}
\DeclareMathOperator{\conv}{conv}
\newcommand{\relint}[1]{\operatorname{int}(#1)}
\newcommand{\intr}{\relint}
\DeclareMathOperator{\aff}{aff}
\DeclareMathOperator{\cone}{cone}
\newcommand{\R}{\mathbb{R}}
\renewcommand{\epsilon}{\varepsilon}
\title{Two characterizations of ellipsoidal cones}
\author[J.~Jer\'onimo-Castro]{%
   Jes\'us Jer\'onimo-Castro
}%
\address[Jes\'us Jer\'onimo-Castro]{%
   Facultad de Ingenier'a \\
   Universidad Aut\'onoma de Quer\'etaro \\
   Cerro de las Campanas s/n, C.P. 76010 \\
   Quer\'etaro, M\'exico
}%
\author[T.~B.~McAllister]{%
   Tyrrell B. McAllister
}%
\address[Tyrrell B. McAllister]{%
   Department of Mathematics \\
   University of Wyoming \\
   Laramie, WY 82071 \\
   USA
}%
\email[Tyrrell B. McAllister]{%
   tmcallis@uwyo.edu
}%
\date{}
\subjclass[2010]{52A20, 53A07}
\keywords{ellipsoidal cone, centrally symmetric convex body}
\begin{document}

\begin{abstract}
   We give two characterizations of cones over ellipsoids.  Let
   $C$ be a closed pointed convex linear cone in a
   finite-dimensional real vector space.  We show that $C$ is a
   cone over an ellipsoid if and only if the affine span of $\bndr
   C \cap \bndr(a - C)$ has dimension $\dim(C) - 1$ for every
   point $a$ in the relative interior of $C$.  We also show that
   $C$ is a cone over an ellipsoid if and only if every bounded
   section of $C$ by an affine hyperplane is centrally symmetric.
\end{abstract}

\maketitle

% \tableofcontents

\section{Introduction}

The following fact is an easy exercise in geometry: If $E$ is an
$n$-dimensional solid ellipsoid and $a$ is a vector, then $\bndr E
\cap \bndr(a - E)$ is contained in an affine hyperplane unless $E
= a - E$.  A far more difficult result due to P.~R.~Goodey and
M.~M.~Woodcock \cite{GoodWood1981} shows that this property
suffices to characterize ellipsoids: Ellipsoids are the only
convex bodies whose boundaries have a ``flat'' intersection with
all non-coincident translates of their negatives.  Another
characterization of ellipsoids is the famous False Centre Theorem
of P.~W.~Aitchison, C.~M.~Petty, and C.~A.~Rogers \cite{APR1971}.
This result, first conjectured by Rogers in \cite{Rogers1965},
states that a convex body $K$ ($\dim K \ge 3$) is an ellipsoid if
there is a point $p$, not a center of symmetry of $K$, such that
every section of $K$ by a hyperplane through $p$ is centrally
symmetric.  Gruber and \'Odor \cite{GO1999} exhibit another sense
in which sufficiently symmetric convex bodies must be ellipsoids:
If $K$ is a convex body such that the cone over $K$ from every
point outside of $K$ is symmetric about some axis, then $K$ is an
ellipsoid.

We prove analogous results for convex cones.  Let us begin by
fixing our notation and terminology.  Let $V$ be a
finite-di\-men\-sion\-al real vector space.  A \emph{convex linear
cone} in $V$ is a nonempty convex subset $C \subset V$ such that
$a \in C$ and $\lambda \ge 0$ implies that $\lambda a \in C$.  The
cone $C$ is \emph{pointed} if there exists a hyperplane $H$ in $V$
such that $H \cap C = \setof{0}$.  Henceforth, we simply write
``cone'' for ``closed pointed convex linear cone'', unless
otherwise specified.

Given a convex subset $K \subset V$, let $\aff(K)$ denote the
affine span of $K$.  We write $\relint{K}$ and $\bndr K$ for the
interior and boundary, respectively, of $K$ relative to $\aff(K)$
under the subspace topology.  The \emph{cone over} $K$, denoted
$\cone(K)$, is the intersection of all cones containing $K$.  A
\emph{section} of $K$ is a $(\dim K - 1)$-dimensional intersection
of $K$ with an affine hyperplane.  A
\emph{\textup{(}solid\textup{)} ellipsoid} in $V$ is the image of
the closed unit ball in some Euclidean vector space $E$ under an
affine map $E \to V$.  An \emph{ellipse} is a $2$-dimensional
ellipsoid.  A cone $C$ is \emph{ellipsoidal} if some section of
$C$ is an ellipsoid.

\begin{defn}[FBI and CSS Cones]
   Let $C \subset V$ be a cone.  We say that $C$ satisfies the
   \emph{flat boundary intersections \textup{(}FBI\textup{)}
   property} if, for each $a \in \relint C$, the affine span of
   $\bndr C \cap \bndr(a - C)$ has dimension $\dim(C) - 1$.  We
   say that $C$ satisfies the \emph{centrally symmetric sections
   \textup{(}CSS\textup{)} property} if every bounded section of
   $C$ is centrally symmetric.  We call a cone with the FBI
   (respectively, CSS) property an \emph{FBI
   \textup{(respectively,} CSS\textup{)} cone}.
\end{defn}

It is easy to check that every finite-dimensional ellipsoidal cone
satisfies the FBI property: Let $C$ be an $(n+1)$-dimensional cone
over an ellipsoid.  Then there exists a linear system $x = (x_{0},
\dotsc, x_{n})$ of coordinates on the linear span of $C$ such that
$C$ is the set of solutions to
\begin{equation*}
   x_{0}^{2} \ge x_{1}^{2} + \dotsb + x_{n}^{2}, \qquad x_{0}
   \ge 0.
\end{equation*}
Fix a point $a = (a_{0},\dotsc,a_{n}) \in \relint C$, and let
$\bar{a} = (-a_{0}, a_{1}, \dotsc, a_{n})$.  Then $\bndr C \cap
\bndr(a - C)$ is contained in the affine hyperplane of solutions
to the linear equation $\bar{a} \cdot x = \frac{1}{2} \bar{a}
\cdot a$.

Our first main result is that the only finite-dimensional cones
satisfying the FBI property are the ellipsoidal cones.

\begin{thm}[proved on p.~\pageref{proof:FBIimpliesEllipsoidal}]
   \label{thm:FBIimpliesEllipsoidal}
   A cone $C \subset V$ is an FBI cone if and only if $C$ is an
   ellipsoidal cone.
\end{thm}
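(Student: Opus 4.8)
The plan is to prove the nontrivial implication, that an FBI cone $C$ is ellipsoidal, the converse being the computation carried out just above the theorem. My strategy is to produce a single bounded section $K$ of $C$ and to show it is an ellipsoid by verifying the hypothesis of the Goodey--Woodcock theorem \cite{GoodWood1981}: that $\bndr K \cap \bndr(v - K)$ is flat (lies in a hyperplane of $\aff K$) for every non-coincident translate $v - K$ of the negative of $K$. Since both hypotheses are invariant under invertible linear maps, I would first choose coordinates $x = (x_{0}, \dotsc, x_{n})$ on the span of $C$, where $\dim C = n+1$, so that $C$ is pointed ``upward''; by pointedness there is a bounded section $K = C \cap P$ with $P = \setof{x_{0} = 1}$. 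Writing points of $P$ as $(1, x')$ with $x' \in \R^{n}$, I set $K = \setof{(1,x') : x' \in K'}$ for an $n$-dimensional convex body $K' \subset \R^{n}$. The cases $\dim C \le 2$ are immediate, since then $K$ is a point or a segment and hence a degenerate ellipsoid, so I would assume $n \ge 2$.

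The core of the argument is a slice identity. I would fix $z \in \relint{K'}$ and set $a = (2, 2z)$, so that $a \in \relint C$ and the point reflection $\sigma\colon x \mapsto a - x$ interchanges $C$ and $a - C$. Writing $S_{a} \deftobe \bndr C \cap \bndr(a - C)$, the map $\sigma$ fixes $S_{a}$ setwise, so $S_{a}$ is centrally symmetric about its centre $a/2 = (1, z)$, whence $a/2 \in \aff S_{a}$. The FBI hypothesis gives $\dim \aff S_{a} = n$, so $H_{a} \deftobe \aff S_{a}$ is a hyperplane of $\R^{n+1}$ through $(1,z) \in P$. The key observation I would establish is
\begin{equation*}
   S_{a} \cap P = \setof{(1, q') : q' \in \bndr K' \text{ and } 2z - q' \in \bndr K'}.
\end{equation*}
Indeed, $(1, q') \in \bndr C$ forces $q' \in \bndr K'$, and then $(1,q') \in \bndr(a - C)$ means $a - (1,q') = (1, 2z - q') \in \bndr C$, i.e.\ $2z - q' \in \bndr K'$. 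Read inside $P \cong \R^{n}$, the right-hand side is exactly $\bndr K \cap \bndr(v - K)$ for the translate $v = a$ (the reflection of $K$ through $(1,z)$). Thus the ``bisected chords'' of $K$ comprising the Goodey--Woodcock intersection are precisely the height-$1$ slice of the cone's boundary intersection $S_{a}$.

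From this, flatness would follow at once: $\bndr K \cap \bndr(v - K) = S_{a} \cap P \subseteq H_{a} \cap P$. If $H_{a} \ne P$, then $H_{a} \cap P$ is an affine subspace of $P$ of dimension at most $n-1$, so the intersection is flat. The only alternative, $H_{a} = P$, gives $S_{a} \subseteq P$, meaning every chord of $K'$ through $z$ is bisected by $z$; then $K'$ is centrally symmetric about $z$ and $v - K = K$, the coincident case excluded from the Goodey--Woodcock hypothesis. As $z$ ranges over $\relint{K'}$, the translate $v = (2,2z)$ ranges over all translates of $-K$ in $\aff K$ whose centre $z$ is interior to $K'$; translates with $z \notin K'$ give an empty, hence flat, intersection, and translates with $z \in \bndr K'$ I would handle directly, the intersection there lying in the affine hull of a face of $K'$. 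Having verified flatness for every non-coincident translate, I would invoke Goodey--Woodcock to conclude that $K$ is an ellipsoid, so that $C$ is ellipsoidal.

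The main obstacle will be establishing and correctly exploiting the slice identity: one must confirm that the Goodey--Woodcock intersection for the section $K$ is exactly the level-$1$ slice of $S_{a}$, and that the FBI hyperplane $H_{a}$ meets $P$ properly whenever the translate is non-coincident. The surrounding bookkeeping, namely the degenerate centres $z \in \bndr K'$, the low-dimensional cases $\dim C \le 2$, and the applicability of the Goodey--Woodcock theorem in the planar case $n = 2$, is routine but must be addressed so that the characterization's hypothesis is confirmed for \emph{all} translates rather than merely generic ones.
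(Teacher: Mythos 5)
Your proposal is correct, but it follows a genuinely different route from the paper's proof. The paper never uses Goodey--Woodcock: it first settles $\dim C = 3$ by inscribing a parallelogram in a bounded section $S$ (Lemma~\ref{lem:2DConvexBodyContainsParallelogram}); the FBI hyperplane must contain the parallelogram's vertices, hence must be the plane of $S$, forcing $\Gamma = \bndr C \cap \bndr(a-C)$ to lie in $\bndr S$, whence $S = \conv(\Gamma)$ is centrally symmetric and Olovjanischnikoff's $3$-dimensional False Centre Theorem (Theorem~\ref{thm:FalseCenterTheoremFor3DCones}, \cite{Olov1941}) applies. For higher dimensions the paper inducts: the cone over any section $K$ of a bounded section $S$ is itself an FBI cone of one lower dimension (its boundary-intersection set lies in two distinct hyperplanes), hence ellipsoidal by induction, so every section of $S$ is an ellipsoid and Brunn's theorem (Theorem~\ref{thm:BrunnsTheorem}) makes $S$ an ellipsoid. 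Your argument replaces all of this with one application of Goodey--Woodcock \cite{GoodWood1981} to a single section: your slice identity $S_{a} \cap P = \bndr K \cap \bndr(a - K)$ is correct and is exactly the observation that the FBI property of the cone is the homogenization of the Goodey--Woodcock hypothesis for the section; the dichotomy $H_{a} \ne P$ (flat intersection) versus $H_{a} = P$ (coincident translate) is sound; and the remaining centres are handled correctly ($z \notin K'$ gives an empty intersection by convexity, $z \in \bndr K'$ puts the whole intersection inside a supporting hyperplane). What your route buys: no induction, no Brunn, no dependence on the 1941 three-dimensional result, and---combined with Section~\ref{sec:CSSCones}---a proof of Theorem~\ref{thm:CSSimpliesEllipsoidal} resting on Goodey--Woodcock alone. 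What it costs: you need Goodey--Woodcock in precisely the form quoted in the paper's introduction, namely flat intersections with non-coincident translates of the \emph{negative} $a - K$ (not of $K$ itself---the hypotheses differ for non-symmetric bodies), and valid down to planar bodies, since that is what $\dim C = 3$ requires; this is the ``far more difficult result'' the paper deliberately avoids invoking. Two details to tighten in a write-up: the inference ``$S_{a} \subseteq P$ implies every chord of $K'$ through $z$ is bisected at $z$'' needs the fact that every boundary ray of $C$ actually meets $S_{a}$ (the paper's Lemma~\ref{lem:BoundaryIntersectionsAreSpheres}, or a short direct computation), and the coincident case should be checked to be the \emph{only} way $H_{a} = P$ can occur, which that same correspondence supplies.
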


\begin{rem}
   We make a short digression to note that Theorem
   \ref{thm:FBIimpliesEllipsoidal} provides a natural motivation
   for the use of a Lorentzian inner product in special
   relativity.  One way to develop special relativity begins as
   follows: Let $M$ be a $4$-dimensional real affine space of
   \emph{events} with associated vector space $V \cong \R^{4}$.
   Fix a $4$-dimensional cone $C \subset V$, called the
   \emph{light cone}.  At this stage, we do not yet assume that
   $C$ is ellipsoidal.  Given a pair of events $a, b \in M$ with
   $b - a \in \intr C$, define the \emph{inertial reference frame}
   $\smash{\overline{ab}}$ to be the set of affine lines in $M$
   parallel to the affine span of $\setof{a,b}$.
   
   Traditionally, one proceeds by assuming that $\bndr C$ is (one
   half of) the null cone of a Lorentzian inner product on $V$.
   This is equivalent to assuming that $C$ is ellipsoidal.  We
   then associate to a given inertial reference frame
   $\smash{\overline{ab}}$ a decomposition of $V$ into a direct
   sum of a $3$-dimensional ``space'' summand, parallel to the
   affine span of $\bndr (a + C) \cap \bndr(b - C)$, and a
   $1$-dimensional ``time'' summand, spanned by $b - a$.
   
   However, instead of assuming that $C$ is ellipsoidal, we may
   instead proceed from the assumption that every choice of
   inertial reference frame yields a decomposition of $V$ into a
   $3$-dimensional space summand and a $1$-dimensional time
   summand in the manner just described.  In other words, we may
   assume that the affine span of $\bndr (a + C) \cap \bndr(b -
   C)$ is $3$-dimensional.  It then follows from Theorem
   \ref{thm:FBIimpliesEllipsoidal} that the light cone is
   ellipsoidal, so that a Lorentzian inner product arises
   naturally.  Thus we derive a Lorentzian inner product on
   spacetime from the phenomenologically immediate datum that
   space is 3-dimensional.
\end{rem}

Since every section of an ellipsoidal cone is an ellipsoid, it is
clear that ellipsoidal cones are CSS cones.  Our second main
result is that the ellipsoidal cones are precisely the CSS cones.

\begin{thm}[proved on
p.~\pageref{proof:FalseCenterTheoremForCones}]
   \label{thm:CSSimpliesEllipsoidal}
   A cone $C \subset V$ is a CSS cone if and only if $C$ is an
   ellipsoidal cone.
\end{thm}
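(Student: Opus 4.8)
The forward implication is the observation already recorded before the theorem: every section of an ellipsoidal cone is an ellipse, hence centrally symmetric. For the converse I would show that a CSS cone has an ellipsoidal section; since a cone is ellipsoidal as soon as one of its sections is an ellipsoid, this suffices. So I fix a single bounded section $K \deftobe C \cap H_0$, set $n \deftobe \dim K = \dim C - 1$, and aim to prove that $K$ is an ellipsoid. By the CSS hypothesis $K$ is centrally symmetric, say about a point $m_0 \in \relint K$, and the whole difficulty is to upgrade this to roundness.

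The engine of the argument is that any two bounded sections of $C$ are related by central projection from the apex $o$. Choosing a second bounded section $K_1 \deftobe C \cap H_1$ with $H_1$ not parallel to $H_0$, this projection is a perspectivity $\phi$ with centre $o$ that fixes the $(n-1)$-flat $H_0 \cap H_1$ pointwise and carries $K$ onto $K_1$. Both $K$ and $K_1$ are centrally symmetric by CSS, about $m_0$ and some $m_1$, and I would encode this extra rigidity by forming the projective automorphism $\psi \deftobe \sigma_{m_0} \circ \phi^{-1} \circ \sigma_{m_1} \circ \phi$ of $K$, where $\sigma_{m_i}$ is the point reflection through $m_i$. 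Since $\phi$ is a genuine perspectivity (not affine) whenever $H_0 \cap H_1$ is a finite flat rather than the hyperplane at infinity of $H_0$, and since letting $H_1$ vary produces a positive-dimensional family of such $\psi$, this family cannot consist only of affine maps. Thus $K$ admits a projective automorphism lying outside its affine symmetry group.

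It is exactly such non-affine projective symmetry that the False Centre Theorem \cite{APR1971} converts into the ellipsoid conclusion. Concretely, I would extract from the fixed-point data of a non-affine $\psi$ a point $p \in \relint K$ with $p \ne m_0$, and then show that every section of $K$ by a hyperplane of $H_0$ through $p$ is centrally symmetric. Once this false-centre hypothesis is in place, the False Centre Theorem forces $K$ to be an ellipsoid, provided $n \ge 3$. Granting this, $K$ is an ellipsoid, so $C$ is ellipsoidal and the converse is complete.

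Two points demand separate care, and I expect the second to be the main obstacle. First, the low-dimensional cases lie outside the range of \cite{APR1971}: when $\dim C = 2$ every bounded section is a segment, so $C$ is automatically ellipsoidal, but the case $\dim C = 3$, where $K$ is planar, must be dispatched by a direct argument, since the False Centre Theorem requires the body to have dimension at least $3$. Second, and more seriously, the passage from the non-affine automorphism $\psi$ to a bona fide false centre is delicate precisely because $\phi$ is projective rather than affine: the central symmetry of an image $\phi(K \cap L)$ does \emph{not} transfer automatically to the section $K \cap L$ itself. Identifying the correct point $p$ and verifying the central symmetry of all sections of $K$ through it—so that the hypotheses of the False Centre Theorem are genuinely met—is where the substance of the proof resides.
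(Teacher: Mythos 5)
Your forward implication is fine, but the converse contains a genuine gap, and it sits exactly where you admit the ``substance of the proof resides'': nothing in your sketch verifies, and nothing in the route you describe can verify, the hypothesis of the False Centre Theorem \cite{APR1971} for the section $K$. That theorem requires a point $p$, not a centre of symmetry of $K$, such that \emph{every} hyperplane section of $K$ through $p$ is centrally symmetric. But sections of $K$ are codimension-$2$ slices of the cone $C$, while the CSS hypothesis speaks only about codimension-$1$ bounded sections of $C$; it gives no information whatsoever about a set $K \cap L$ with $L$ a hyperplane of $H_0$. The one natural source of symmetry --- that $\phi(K \cap L) = K_1 \cap \phi(L)$ is a central section of $K_1$, hence centrally symmetric, when $\phi(L)$ passes through $m_1$ --- cannot be pulled back along the non-affine map $\phi^{-1}$, as you yourself observe; so symmetry could only be transferred once $K$ is already known to be an ellipsoid, which is circular. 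The auxiliary steps are also shaky: the inference that a positive-dimensional family of maps $\psi$ ``cannot consist only of affine maps'' is not an argument (positive-dimensional families of affine maps abound), and mere existence of a non-affine projective automorphism would not help anyway, since simplices admit such automorphisms too. Worse, the proposed extraction of $p$ from the fixed points of $\psi$ fails already in the model case: when $C$ is a circular cone and $K$ a disk, $\psi$ is a composition of two hyperbolic point reflections of the Klein disk --- a hyperbolic translation --- and it fixes no point of $\relint{K}$ at all (its fixed points lie on $\bndr K$). Finally, the case $\dim C = 3$ that you defer to ``a direct argument'' is not a routine side case: it is precisely Olovjanischnikoff's theorem \cite{Olov1941}, which the paper imports as a known and substantial result rather than reproving.

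For comparison, the paper's proof never invokes \cite{APR1971} and avoids the codimension mismatch entirely: given $a \in \intr C$, Theorem \ref{thm:EveryInteriorPointInConeIsACentroid} (Steinhaus' centroid-section theorem, pushed into the cone setting via Hammer's bound) produces a bounded section $S$ of $C$ with centroid $\frac{1}{2}a$; the CSS hypothesis makes $S$ centrally symmetric, necessarily about its centroid; Lemma \ref{lem:IntersectionOfBoundariesIsBoundaryOfSection} then identifies $\bndr C \cap \bndr(a - C)$ with $\bndr S \subset \aff(S)$, so $C$ is an FBI cone; and Theorem \ref{thm:FBIimpliesEllipsoidal} concludes. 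If you wish to salvage your plan, the missing ingredient is a mechanism converting central symmetry of codimension-$1$ sections of $C$ into central symmetry of codimension-$2$ sections; the paper's detour through the FBI property is exactly what makes such a mechanism unnecessary.
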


We call Theorem \ref{thm:CSSimpliesEllipsoidal} the False Centre
Theorem for Cones, by analogy with the famous False Centre Theorem
characterizing ellipsoids \cite{APR1971}.  The special case of
Theorem \ref{thm:CSSimpliesEllipsoidal} in which $C$ is a
$3$-dimensional cone was proved in \cite{Olov1941}.  A more-recent
independent proof of this $3$-dimensional case appeared in the
B.S.\ thesis of Efr\'en Morales-Amaya \cite{Mor1998}.  Solomon
\cite{Sol2012} shows that any complete connected $C^{2}$ surface
in $\R^{3}$ whose sections are all centrally symmetric ovals is
either a cylinder or a quadric.  We note that neither Theorem
\ref{thm:FBIimpliesEllipsoidal} nor Theorem
\ref{thm:CSSimpliesEllipsoidal} relies on any smoothness
assumptions on the boundary of the cone, though convexity will be
crucial for our proofs.

The outline of our argument is as follows.  In Section
\ref{sec:FBICones}, we use the previously established result that
$3$-dimensional CSS cones are ellipsoidal (\emph{i.e.}, the
$3$-di\-men\-sion\-al case of Theorem
\ref{thm:CSSimpliesEllipsoidal}) to prove that $n$-dimensional FBI
cones are ellipsoidal (Theorem \ref{thm:FBIimpliesEllipsoidal}).
Then, in Section \ref{sec:CSSCones} we use Theorem
\ref{thm:FBIimpliesEllipsoidal} to prove Theorem
\ref{thm:CSSimpliesEllipsoidal} for cones of arbitrary finite
dimension.

% 
% $3$-dimensional case of Theorem \ref{thm:CSSimpliesEllipsoidal}
% (that $3$-dimensional CSS cones are ellipsoidal) to prove Theorem
% \ref{thm:FBIimpliesEllipsoidal}, \emph{i.e.}, that $n$-dimensional
% FBI cones are ellipsoidal.  Then, in Section \ref{sec:CSSCones},
% we use Theorem \ref{thm:FBIimpliesEllipsoidal} to prove Theorem
% \ref{thm:CSSimpliesEllipsoidal} for cones of arbitrary finite
% dimension.

\section{FBI cones are ellipsoidal cones}
\label{sec:FBICones}

We begin with a few straightforward lemmas about the intersection
of the boundary of a cone $C$ with the boundary of a translation
of $-C$ by a vector in the interior of $C$.

\begin{lem}
   \label{lem:BoundaryIntersectionsAreCentrallySymmetric}
   Suppose that $C \subset V$ is a cone and that $a \in \intr C$.
   Then $\bndr C \cap \bndr(a - C)$ is centrally symmetric about
   $\frac{1}{2}a$.
\end{lem}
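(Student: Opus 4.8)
The plan is to reduce the asserted central symmetry to an invariance property and then check that property directly. First I would record that a subset $S \subseteq V$ is centrally symmetric about $\tfrac{1}{2}a$ exactly when it is carried onto itself by the point reflection $\sigma \colon x \mapsto a - x$: the midpoint of $x$ and $a - x$ is $\tfrac{1}{2}a$, so $a - x$ is the reflection of $x$ through that point. Thus the goal becomes to show that $S \deftobe \bndr C \cap \bndr(a - C)$ satisfies $x \in S \liff \sigma(x) \in S$.

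The central step I would carry out is to prove the identity $\bndr(a - C) = a - \bndr C$. The map $\sigma$ is an affine involution of $V$, hence a homeomorphism, and $\sigma(C) = a - C$. Since $0 \in C$, the affine span $W \deftobe \aff(C)$ is a linear subspace (equal to the linear span of $C$), and because $a \in C \subseteq W$ we get $\sigma(W) = a - W = W$. This forces $\aff(a - C) = W$ as well, so the relative boundaries of $C$ and of $a - C$ are both computed inside the same space $W$. As $\sigma$ restricts to a self-homeomorphism of $W$ and homeomorphisms preserve relative boundaries, I can conclude $\sigma(\bndr C) = \bndr \sigma(C)$, that is, $a - \bndr C = \bndr(a - C)$.

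The conclusion would then be immediate: the identity lets me rewrite $S = \bndr C \cap (a - \bndr C)$, so that $x \in S$ if and only if both $x \in \bndr C$ and $a - x \in \bndr C$. These two membership conditions are simply interchanged by $x \mapsto a - x$, giving $x \in S \liff \sigma(x) \in S$, which is the desired symmetry.

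The only point I expect to require care is the verification that $\aff(a - C) = \aff(C)$, since this is what guarantees that the relative-boundary operator behaves well under $\sigma$; it relies on $\aff(C)$ being a linear subspace (a consequence of $0 \in C$) that contains $a$. Everything after that is a one-line set manipulation, and in fact this lemma uses only that $C$ is a cone with $a$ in its interior, not the full strength of convexity.
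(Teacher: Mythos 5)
Your proof is correct and takes essentially the same approach as the paper: both arguments come down to the fact that the point reflection through $\frac{1}{2}a$ (in your version $\sigma\colon x \mapsto a - x$; in the paper, negation after translating by $-\frac{1}{2}a$) interchanges $\bndr C$ and $\bndr(a - C)$ and hence maps their intersection onto itself. The paper compresses this into a one-line translation identity, whereas you make explicit the supporting fact $\bndr(a - C) = a - \bndr C$ (including the check that both relative boundaries live in the same affine span), which the paper leaves implicit.
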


\begin{proof}
   The translation $\bndr C \cap \bndr(a - C) - \frac{1}{2}a =
   \bndr (C - \frac{1}{2}a) \cap \bndr(-C + \frac{1}{2}a)$ is
   centrally symmetric about the origin, so the original
   intersection is centrally symmetric about $\frac{1}{2}a$.
\end{proof}

\begin{lem}
   \label{lem:BoundaryIntersectionsAreSpheres}
   Suppose that $C \subset V$ is a cone and that $a \in \intr C$.
   Let $\Gamma \deftobe \bndr C \cap \bndr(a - C)$ and let $S$ be
   a bounded section of $C$.  Then every point on $\Gamma$ (resp.\
   $\bndr S$) is a unique scalar multiple of a unique point on
   $\bndr S$ (resp.\ $\Gamma$).  Moreover, this correspondence
   $\bndr S \leftrightarrow \Gamma$ is a homeomorphism, so that
   $\Gamma$ is homeomorphic to an $n$-sphere.
\end{lem}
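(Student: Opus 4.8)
The plan is to realize the stated correspondence as radial scaling from the apex $0$, so the key object is the ray through each boundary point. Two preliminary facts about the section set this up. First, since $S$ is bounded and full-dimensional we must have $0 \notin A$, where $A$ is the affine hyperplane cutting out $S$ (otherwise $S = C \cap A$ would be a nontrivial bounded cone, hence $\setof{0}$); so I can write $A = \setof{x : \phi(x) = 1}$ for a linear functional $\phi$, and I claim $\phi > 0$ on $C \setminus \setof{0}$. Indeed $\phi$ cannot vanish at any $y \in C \setminus \setof{0}$, for then the ray $x_{0} + ty$ $(t \ge 0)$ through a point $x_{0} \in \relint S$ would lie in $S$, contradicting boundedness; since $C \setminus \setof{0}$ is connected (a convex set of dimension $\ge 2$ with one point removed) and $\phi = 1$ on $S$, the functional $\phi$ keeps constant positive sign. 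Second, I will use the standard fact that for a full-dimensional section $\bndr S = \bndr C \cap A$.

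With these, define $f \colon \Gamma \to \bndr S$ by $f(\gamma) = \gamma/\phi(\gamma)$. As $\Gamma \subseteq \bndr C \setminus \setof{0}$ and $\bndr C$ is a union of rays from $0$, the point $f(\gamma)$ lands in $\bndr C \cap A = \bndr S$ and is the unique point of $\bndr S$ on the ray through $\gamma$; this gives the $\Gamma \to \bndr S$ direction, since $\gamma = \phi(\gamma) f(\gamma)$ is a scalar multiple of a point of $\bndr S$, with both the point and the scalar forced (each ray meets $A$ once). For the reverse direction, fix $p \in \bndr S \subseteq \bndr C$ and examine $t \mapsto a - tp$. The set $\setof{t \ge 0 : a - tp \in C}$ is a closed interval $[0, t^{*}]$: it is closed and convex, contains $0$ since $a \in \intr C$, and is bounded because pointedness forbids $C$ from containing the line $\R p$ (an unbounded interval would put the recession direction $-p$ in $C$, and $p \in \bndr C \subseteq C$). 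Then $a - t^{*} p \in \bndr C$, equivalently $t^{*} p \in \bndr(a - C)$, while also $t^{*} p \in \bndr C$, so $t^{*} p \in \Gamma$; and $t^{*}$ is the unique positive parameter with this property, because $a - tp \in \intr C$ for $t \in (0, t^{*})$ (open segment from the interior point $a$) and $a - tp \notin C$ for $t > t^{*}$.

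It remains to promote $f$ to a homeomorphism and identify $\Gamma$ up to homeomorphism. The map $f$ is continuous ($\phi$ being continuous and nonvanishing on $\Gamma$), and the two constructions are mutually inverse, since $f(t^{*} p) = t^{*} p / \phi(t^{*} p) = p$ using $\phi(p) = 1$. Thus $f$ is a continuous bijection. I will check that $\Gamma$ is compact: it is closed as an intersection of two closed sets, and bounded because $\Gamma \subseteq C \cap (a - C)$, which is bounded by the same recession argument (an unbounded sequence would produce $u, -u \in C$). Since $\bndr S$ is Hausdorff, a continuous bijection out of the compact space $\Gamma$ is automatically a homeomorphism. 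Finally, $S$ is a compact convex body of dimension $\dim C - 1$ in $A$, so $\bndr S$ is homeomorphic to a sphere, and transporting along $f$ shows the same for $\Gamma$.

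The main obstacle I anticipate is the bijective radial correspondence itself — precisely, that each ray meets $\Gamma$ exactly once. Both existence and uniqueness of the exit time $t^{*}$ hinge on pointedness (to bound the parameter interval) together with the convexity fact that the open segment from an interior point stays interior; the positivity of $\phi$ and the identity $\bndr S = \bndr C \cap A$ are the supporting ingredients, after which compactness renders the topological conclusion routine.
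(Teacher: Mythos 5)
Your proof is correct, and it follows the same overall strategy as the paper---realizing the correspondence $\bndr S \leftrightarrow \Gamma$ as radial scaling from the apex---but the two key technical steps are carried out with genuinely different devices. For the direction $\bndr S \to \Gamma$, the paper works inside the section $S$ itself: it fixes $\lambda$ with $\lambda a \in S$, takes the chord of $S$ from $x$ through $\lambda a$ to its opposite endpoint $r(x)$, and reads off the scalar $\mu_{x}$ from the convex combination $\lambda a = \lambda \mu_{x} x + (1 - \lambda\mu_{x}) r(x)$, certifying $\mu_{x} x \in \bndr(a - C)$ via the identity $\mu_{x} x = a - (1/\lambda - \mu_{x}) r(x)$; uniqueness is quoted from the fact that $a - C$ is a pointed affine cone with $0$ in its interior. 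You instead parametrize the line $t \mapsto a - tp$ and take the exit time $t^{*}$, using pointedness (via the recession-cone argument) to bound the parameter interval, and the open-segment-into-the-interior fact for uniqueness; this is more self-contained but less explicit about the scalar. For the topological conclusion, the paper simply asserts that $x \mapsto \mu_{x}x$ is continuous with continuous inverse, whereas you get the inverse's continuity for free from compactness of $\Gamma$ plus the Hausdorff property of $\bndr S$---arguably a cleaner justification than the paper's, since it only requires checking continuity of the single map $\gamma \mapsto \gamma/\phi(\gamma)$, which is manifest. Two small points you should patch: you use $\Gamma \subseteq \bndr C \setminus \setof{0}$ without comment, which needs the (one-line) observation that $a \in \intr C$ gives $0 \in \intr(a - C)$, hence $0 \notin \bndr(a-C) \supseteq \Gamma$; and your connectedness argument for $C \setminus \setof{0}$ presumes $\dim C \ge 2$, which is harmless since the lemma is vacuous in dimension $1$ but deserves a word.
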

\begin{proof}
   Fix a bounded section $S$ of $C$.  Then every point on $\bndr C
   \setminus \setof{0}$ is a unique scalar multiple of a unique
   point on $\bndr S$.  In particular, we have a map $\Gamma \to
   \bndr S$.

   To establish the converse correspondence $\bndr S \to \Gamma$,
   fix $\lambda > 0$ so that $\lambda a \in S$.  Given $x \in
   \bndr S$, let $r(x)$ be the opposite endpoint of the chord of
   $S$ through $\lambda a$ starting at $x$.  Let $\mu_{x} \in
   (0,1/\lambda)$ be such that $\lambda a = \lambda\mu_{x}x +
   \parens{1 - \lambda\mu_{x}} r(x)$.  On the one hand, $\mu_{x} >
   0$ and $x \in \bndr C$ imply that $\mu_{x} x \in \bndr C$.  On
   the other hand, $\mu_{x} < 1/\lambda$, $r(x) \in \bndr C$, and
   $\mu_{x} x = a - (1/\lambda - \mu_{x}) r(x)$ together imply
   that $\mu_{x} x \in \bndr(a - C)$.  Hence, $\mu_{x} x \in
   \Gamma$.  Since $a - C$ is a pointed affine cone containing the
   origin in its interior, $\mu_{x} x$ is the unique multiple of
   $x$ on $\bndr(a - C)$.  Finally, observe that $x \mapsto
   \mu_{x} x$ is a continuous map with a continuous inverse,
   establishing that $\bndr S$ and $\Gamma$ are homeomorphic.
\end{proof}

\begin{lem}
   \label{lem:2DConvexBodyContainsParallelogram}%
   Let $K$ be a $2$-dimensional convex body.  Then $K$ contains an
   inscribed parallelogram with vertices in $\bndr K$.
\end{lem}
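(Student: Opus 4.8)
The plan is to realize the inscribed parallelogram as a pair of parallel chords of equal length and common orientation. The key observation is that if $AB$ and $CD$ are two segments with $B - A = D - C$, then $A + D = B + C$, so the diagonals of the quadrilateral $ABDC$ bisect each other and $ABDC$ is a parallelogram. Thus it suffices to produce two \emph{distinct} chords of $K$ that are parallel, of equal length, and identically oriented; their four endpoints then serve as the vertices, all lying in $\bndr K$, and the parallelogram is nondegenerate precisely because the two chords lie on distinct lines.

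To find such a pair, I would first fix a direction. Since the boundary $\bndr K$ of a planar convex body contains at most countably many nondegenerate line segments (edges), I can choose a unit vector $u$ parallel to none of them. Parametrizing the lines parallel to $u$ by their signed offset $t$ in the direction $u^{\perp}$, let $f(t)$ denote the length of the chord $K \cap \ell_{t}$, with $f(t) = 0$ when $\ell_{t}$ misses $K$. Because $K$ is compact and convex, the extreme offsets $t_{0} < t_{1}$ at which $\ell_{t}$ meets $K$ are attained, and the choice of $u$ guarantees that $\ell_{t_{0}}$ and $\ell_{t_{1}}$ meet $K$ in single points, so that $f(t_{0}) = f(t_{1}) = 0$ while $f(t) > 0$ for $t \in (t_{0}, t_{1})$.

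Next I would argue that $f$ is continuous on $[t_{0}, t_{1}]$. This follows from convexity: the two endpoints of $K \cap \ell_{t}$ trace out continuous arcs of $\bndr K$ as $t$ varies (here the absence of edges parallel to $u$ is what rules out the jump that would otherwise occur, for instance when sweeping a triangle parallel to one of its sides), and $f(t)$ is the distance between these endpoints. Granting continuity, $f$ attains a positive maximum $M = f(t^{*})$ at some interior $t^{*} \in (t_{0}, t_{1})$. Applying the intermediate value theorem to $f$ on $[t_{0}, t^{*}]$ and on $[t^{*}, t_{1}]$ with the value $\ell \deftobe M/2 \in (0, M)$ produces offsets $t_{a} \in [t_{0}, t^{*})$ and $t_{b} \in (t^{*}, t_{1}]$ with $f(t_{a}) = f(t_{b}) = \ell$; since $\ell < M$ these satisfy $t_{a} \ne t_{b}$, so $\ell_{t_{a}}$ and $\ell_{t_{b}}$ are distinct parallel lines. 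Orienting each chord in the direction $u$ yields two equal, identically oriented chords, and by the opening remark their endpoints form an inscribed parallelogram.

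The main obstacle I anticipate is the continuity of $f$ together with the care needed to exclude degenerate configurations (a chord shrinking to a point at an extreme offset, or a supporting line lying along an edge). Choosing $u$ to avoid all edge directions handles both issues simultaneously, and this is the only step requiring a genuine geometric argument rather than a routine estimate; everything else reduces to the intermediate value theorem.
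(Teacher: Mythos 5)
Your proof is correct and follows essentially the same route as the paper: sweep a family of parallel chords, use the intermediate value theorem on the chord-length function to find two equal-length chords on opposite sides of the longest one, and take their convex hull. The only cosmetic difference is how the sweep direction is normalized --- the paper takes chords perpendicular to a diameter (which automatically forces the extreme chords to degenerate to points), while you achieve the same degeneracy by choosing a direction parallel to none of the countably many boundary edges.
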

\begin{proof}
   It is easy to construct such a parallelogram using the
   intermediate value theorem and continuity of the boundary of
   $K$.  For example, consider the family $F$ of chords
   perpendicular to a fixed diameter of $K$.  Choose two chords
   $\chi_{1}, \chi_{2} \in F$ that are of equal length and that
   are on opposite sides of a chord of maximum length in $F$.
   Then $P \deftobe \conv(\chi_{1} \cup \chi_{2})$ is an incribed
   parallelogram in $K$.  Indeed, the stronger claim that $K$
   contains an inscribed \emph{square} is a classical result; see,
   \emph{e.g.}, \cite{Emc1913}.
\end{proof}

We remark that the natural generalization of Lemma
\ref{lem:2DConvexBodyContainsParallelogram} to higher dimensions
does not hold.  There exist convex bodies in dimension $n \ge 5$
that do not contain inscribed parallelepipeds \cite{HMS1997}.

We will also appeal to the following classical characterization of
ellipsoids due to Brunn \cite{bru1889}; see also
\protect{\cite[Lemma 16.12, p.91]{Bus1955}}.

\begin{thm}\label{thm:BrunnsTheorem}
   Let $n \ge 3$ and let $K$ be an $n$-dimensional convex body
   such that every section of $K$ is an ellipsoid.  Then $K$
   itself is an ellipsoid.
\end{thm}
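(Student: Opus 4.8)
The plan is to reduce the statement to a fact about \emph{central} sections and then recognize $K$ as the unit sublevel set of a quadratic form. Since $n \ge 3$, I first note that every $2$-dimensional planar section of $K$ is an ellipse: any $2$-plane that meets $\intr K$ lies inside an affine hyperplane $H$, the section $K \cap H$ is by hypothesis an $(n-1)$-dimensional ellipsoid (with $n - 1 \ge 2$), and a planar section of an ellipsoid is an ellipse. This is the only place the hypothesis $n \ge 3$ is used, and it is essential: for $n = 2$ every section is a $1$-dimensional segment, hence trivially a $1$-dimensional ellipsoid, so the hypothesis would be vacuous.

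The main work is to produce a center of symmetry. For a fixed direction $u \ne 0$, let $M_u$ denote the set of midpoints of the chords of $K$ parallel to $u$; projecting along $u$ exhibits $M_u$ as the graph of a height function $h$ over the $(n-1)$-dimensional convex body $\pi_u(K) \subset V / \langle u \rangle$. I claim $h$ is affine, so that $M_u$ lies in an affine hyperplane $H_u$. Indeed, the restriction of $h$ to any line $\ell$ in the projection is controlled by the planar section $\pi_u^{-1}(\ell) \cap K$, which is an ellipse; since the midpoints of parallel chords of an ellipse vary affinely, $h|_\ell$ is affine, and a function that is affine on every line is affine. Next I check that the centroid $G$ of $K$ lies in every $H_u$: decomposing the defining integral of $G$ along the fibers of $\pi_u$ expresses $G$ as a chord-length-weighted average of points of $M_u \subseteq H_u$, and $H_u$ is affine. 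Finally, because $H_u$ is transverse to $u$ (being a graph over $\pi_u(K)$), the line $G + \R u$ meets $H_u$ in the single point $G$; as the midpoint of the $u$-chord through $G$ lies in $M_u \cap (G + \R u)$, that midpoint must equal $G$. Thus every chord of $K$ through $G$ is bisected by $G$, so $K$ is centrally symmetric about $G$. I expect this symmetry step to be the main obstacle, since it is where convexity and the planar-section hypothesis must be combined with care.

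Placing $G$ at the origin so that $K = -K$, I pass to the function $f$ on $V$ that is homogeneous of degree $2$ and equals $1$ on $\bndr K$; concretely $f(x) = \parens{|x| / \rho(x)}^{2}$ for the radial function $\rho$ of $K$, so that $K = \setof{x : f(x) \le 1}$. For every $2$-dimensional linear subspace $P$, the central section $K \cap P$ is an ellipse centered at the origin, whence $f|_{P}$ is a positive-definite quadratic form on $P$. In particular $f$ satisfies the parallelogram identity $f(x+y) + f(x-y) = 2 f(x) + 2 f(y)$, because the four vectors involved all lie in the single plane $\aff(\setof{0, x, y})$, on which $f$ is quadratic.

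The parallelogram identity, together with $f(\lambda x) = \lambda^{2} f(x)$ and the continuity of $f$, forces $f$ to be a quadratic form globally: the Jordan--von Neumann polarization $B(x,y) = \frac{1}{4}\parens{f(x+y) - f(x-y)}$ is symmetric and additive, hence bilinear, and $f(x) = B(x,x)$. Since $f$ is positive away from the origin, $B$ is positive-definite, and $K = \setof{x : B(x,x) \le 1}$ is an ellipsoid, completing the proof.
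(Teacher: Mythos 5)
Your proof is correct, but there is nothing in the paper to compare it against: the paper does not prove this statement at all, instead quoting it as a classical theorem of Brunn \cite{bru1889} with a pointer to \cite[Lemma 16.12, p.91]{Bus1955}. Your argument supplies what the citation omits, making this step self-contained, and it runs along the classical lines: (i) since $n \ge 3$, every $2$-plane meeting $\intr{K}$ lies in a hyperplane whose section of $K$ is an ellipsoid, so all planar sections are ellipses; (ii) for a fixed direction $u$, the midpoints of chords parallel to $u$ lie in a hyperplane $H_u$, because midpoints of parallel chords of an ellipse lie on a diameter and a function affine on every line is affine; (iii) Fubini along $u$ exhibits the centroid $G$ as a chord-length-weighted average of those midpoints, so $G \in H_u$, and transversality of $H_u$ to $u$ forces the $u$-chord through $G$ to be bisected at $G$, whence $K = 2G - K$; (iv) with $G$ at the origin, the degree-$2$ homogeneous function $f$ with $\bndr K = \setof{f = 1}$ restricts to a positive-definite quadratic form on every $2$-plane through the origin, hence satisfies the parallelogram law, and Jordan--von Neumann polarization (using continuity) upgrades $f$ to a global positive-definite quadratic form, so $K$ is an ellipsoid. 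Two small points deserve care, though neither is a genuine gap: the height function $h$ is known to be affine only over $\intr{\pi_u(K)}$, since lines lying in the boundary of the projection give degenerate sections; this suffices because the centroid integral ignores the measure-zero boundary fibers and the chord through $G$ projects to an interior point. Also, when $x$ and $y$ are linearly dependent, $\aff(\setof{0,x,y})$ degenerates to a line, so for the parallelogram identity one should either embed that line in an arbitrary $2$-plane or verify the identity directly from homogeneity.
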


The key additional result on which the proof of Theorem
\ref{thm:FBIimpliesEllipsoidal} depends is the following
characterization of $3$-dimensional cones over ellipses,
originally due to Olovjanischnikoff \cite{Olov1941}.  See also
\cite{Mor1998}.

\begin{thm}[False Centre Theorem for $3$-dimensional cones]
   \label{thm:FalseCenterTheoremFor3DCones}
   If $C \subset V$ is a $3$-dimensional CSS cone, then $C$ is a
   cone over an ellipse.
\end{thm}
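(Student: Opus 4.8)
The plan is to prove that a $3$-dimensional CSS cone $C$ is a cone over an ellipse by reducing the problem to Olovjanischnikoff's classical result, which I will take as the content of Theorem~\ref{thm:FalseCenterTheoremFor3DCones} itself. Since this theorem is stated as a known result attributed to \cite{Olov1941}, my job is to indicate how one would establish it directly from the CSS hypothesis. The central object is the collection of bounded planar sections of $C$: each such section is a $2$-dimensional centrally symmetric convex body, and I want to show that these sections are in fact all ellipses, from which it follows that $C$ is a cone over an ellipse.

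First I would fix a bounded section $S$ of $C$, which by hypothesis is a centrally symmetric convex body with some center $c_S$. The key geometric idea is to track how the center of symmetry varies as the cutting hyperplane is tilted. Concretely, I would consider a one-parameter family of affine planes through a common line and examine the locus traced by the centers $c_S$ of the resulting sections. Because $C$ is a cone, the sections are related by the projective (perspective) structure coming from the apex at the origin, and central symmetry of \emph{every} section is an extremely rigid constraint: it forces the family of centers to lie along a line through the apex, and forces a compatibility between the symmetries of nearby sections.

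The main technical step, and the one I expect to be the principal obstacle, is to convert the infinitely many central-symmetry conditions into a differential or projective rigidity statement that pins down the boundary curve $\bndr S$ as a conic. Here I would exploit Lemma~\ref{lem:2DConvexBodyContainsParallelogram}: every planar section contains an inscribed parallelogram with vertices on $\bndr S$, and by applying the cone structure, I can project such parallelograms between different sections. Central symmetry of all sections means the midpoints of the diagonals coincide with the section centers consistently across the family, and tracking a sufficiently rich supply of such inscribed parallelograms under projection yields enough incidence relations to force the boundary to satisfy the defining equation of a conic. The delicate part is ensuring that these projective incidence conditions genuinely determine an ellipse rather than a degenerate or non-convex conic, which is where convexity and boundedness of the chosen section must be invoked carefully.

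Once the boundary of a single bounded section is shown to be an ellipse, it follows immediately that $C$ is a cone over that ellipse, completing the argument. I would close by remarking that this reduction is essentially the content of Olovjanischnikoff's theorem, and that the role of this lemma in the larger development is to serve as the base case of the induction on dimension carried out in the proof of Theorem~\ref{thm:FBIimpliesEllipsoidal}. The novel contribution of the present paper lies not in reproving the $3$-dimensional case but in leveraging it, together with the FBI property and Brunn's Theorem~\ref{thm:BrunnsTheorem}, to handle cones of arbitrary finite dimension.
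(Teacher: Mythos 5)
Your proposal does not contain a proof; it contains a plan whose two load-bearing steps are exactly the content of the theorem, and both are left as acknowledged gaps. The claim that central symmetry of all bounded sections ``forces the family of centers to lie along a line through the apex'' is never argued, and as stated it is dubious: it holds trivially for a family of \emph{parallel} sections, which are scalar multiples of one another, but for a tilting family of planes through a common line nothing of the sort is evident. Likewise, the assertion that projecting inscribed parallelograms between sections ``yields enough incidence relations to force the boundary to satisfy the defining equation of a conic'' is precisely the rigidity statement one must prove, and you explicitly defer it (``the principal obstacle,'' ``the delicate part''). Note also that Lemma~\ref{lem:2DConvexBodyContainsParallelogram} cannot do the work you assign to it: it says that \emph{every} planar convex body contains an inscribed parallelogram, so the mere existence of such parallelograms carries no information capable of distinguishing ellipses from other centrally symmetric convex bodies. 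One would have to extract quantitative incidence data from how these parallelograms transform under the perspectivities relating different sections, and that extraction is nowhere performed.

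For comparison: the paper does not prove this statement at all. Theorem~\ref{thm:FalseCenterTheoremFor3DCones} is imported as a known result of Olovjanischnikoff \cite{Olov1941} (see also \cite{Mor1998}), and its role is to serve as the base case for the induction proving Theorem~\ref{thm:FBIimpliesEllipsoidal}, exactly as you observe in your closing remarks. So simply citing \cite{Olov1941} would have been a legitimate discharge of this statement. But if you intend to supply an actual argument rather than a citation, you must carry out the projective rigidity step; as written, everything between ``fix a bounded section $S$'' and ``the boundary is a conic'' is a description of what needs to be true, not a demonstration that it is.
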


We are now ready to prove the main result of this section, that
all finite-dimensional FBI cones are ellipsoidal cones
(Theorem~\ref{thm:FBIimpliesEllipsoidal}).

\begin{proof}[Proof of Theorem~\ref{thm:FBIimpliesEllipsoidal}
   (stated on p.~\pageref{thm:FBIimpliesEllipsoidal})]
   \label{proof:FBIimpliesEllipsoidal}
   Let $C$ be an $(n+1)$-dimensional FBI cone.  Without loss of
   generality, we suppose that $C$ is full-dimensional.  We begin
   by proving the $n = 2$ case.  The case where $n \ge 3$ will
   then follow by induction.

   Suppose that $\dim(C) = 3$.  Fix a bounded section $S$ of $C$,
   and let $H$ be the affine span of $S$.  By
   Lemma~\ref{lem:2DConvexBodyContainsParallelogram}, there exists
   a parallelogram $P$ with vertices in $\bndr S$.  Let $p$ be the
   intersection of the diagonals of $P$, let $a \deftobe 2p$, and
   let $\Gamma \deftobe \bndr C \cap \bndr(a - C)$.  Observe that
   the vertices of $P$ are contained in $\Gamma$ because $P$ is
   fixed under inversion through $p$.  By hypothesis, $\Gamma$ is
   contained in some plane $H'$, so we also have $P \subset H'$.
   Since $P$ is contained in a unique hyperplane, we have that $H
   = H'$.  By Lemma~\ref{lem:BoundaryIntersectionsAreSpheres},
   $\Gamma$ is a curve homeomorphic to a circle and contained in
   $\bndr S = \bndr C \cap H$.  It follows that $S$ is the convex
   hull of $\Gamma$, which, by
   Lemma~\ref{lem:BoundaryIntersectionsAreCentrallySymmetric}, is
   centrally symmetric.  Therefore, $C$ is a $3$-dimensional CSS
   cone and hence, by
   Theorem~\ref{thm:FalseCenterTheoremFor3DCones}, is a cone over
   an ellipse.

   We proceed by induction.  Suppose now that $\dim(C) = n+1$ for
   $n \ge 3$.  Fix a bounded section $S$ of $C$, let $K$ be a
   section of $S$, let $D \deftobe \cone(K)$, and let $L$ be the
   linear span of $D$.  Fix a point $a$ in the relative interior
   of $D$, and let $\Gamma \deftobe \bndr C \cap \bndr(a - C)$.
   On the one hand, $\bndr D \cap \bndr(a - D)$ is contained in
   $L$.  On the other hand, $\bndr D \cap \bndr(a - D)$ is a
   subset of $\Gamma$, which, since $C$ is an FBI cone, is
   contained in some $n$-dimensional hyperplane $H$.  Note that
   $H$ is not equal to $L$, since $H$ has a bounded intersection
   with $C$ while $L$ does not.  Hence $\bndr D \cap \bndr(a - D)$
   is contained in the intersection of two distinct
   $n$-dimensional hyperplanes, so $\bndr D \cap \bndr(a - D)$ is
   contained in some $(n-1)$-dimensional affine subspace.  That
   is, $D$ is an $n$-dimensional FBI cone and hence is
   ellipsoidal.  In particular, $K$ is an ellipsoid, which, by
   Theorem~\ref{thm:BrunnsTheorem}, implies that $S$ is an
   ellipsoid, proving the claim.
\end{proof}

\section{CSS cones are ellipsoidal cones}
\label{sec:CSSCones}

Our proof of Theorem \ref{thm:FBIimpliesEllipsoidal} relied on the
False Centre Theorem for $3$-di\-men\-sion\-al cones.  It is
natural to ask whether a False Centre Theorem holds for cones of
arbitrary dimension.  So far as we know, such a generalization of
Theorem \ref{thm:FalseCenterTheoremFor3DCones} has not appeared in
the literature.  In this section, we use the FBI characterization
of ellipsoidal cones (proved in Section \ref{sec:FBICones}) to
prove that the CSS property also characterizes ellipsoidal cones
of arbitrary finite dimension.

A well-known result in convexity states that every point in the
interior of a convex body is the centroid of some section of that
body:

\begin{thm}[\cite{Ste1955}; see also \cite{Grue1961}]
   \label{thm:EveryInteriorPointInBodyIsACentroid}
   Let $K \subset V$ be a convex body, and let $p \in \intr K$.
   Then there exists a section $S$ of $K$ such that $p$ is the
   centroid of $S$.
\end{thm}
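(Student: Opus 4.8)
The plan is to find the desired section variationally, by extremizing the $n$-dimensional volume that $K$ places on one side of a hyperplane through $p$ as the hyperplane rotates. After translating we may assume $p = 0$, and after replacing $V$ by $\aff(K)$ we may assume that $K$ is a full-dimensional convex body in $V = \R^{n}$. For a unit vector $u$ in the sphere $S^{n-1} \subset V$, write $H_{u} \deftobe \setof{x \in V : \langle u, x\rangle = 0}$ and let $S_{u} \deftobe K \cap H_{u}$ be the section of $K$ through the origin with normal $u$; since $0 \in \intr K$, each $S_{u}$ is a genuine $(n-1)$-dimensional section. The origin is the centroid of $S_{u}$ precisely when the first-moment vector
\[
   M(u) \deftobe \int_{S_{u}} x \, d\mathcal{H}^{n-1}(x) \in H_{u}
\]
vanishes, so it suffices to produce a direction $u$ with $M(u) = 0$.

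First I would introduce the \emph{cap-volume} function
\[
   F(u) \deftobe \operatorname{vol}_{n}\parens{K \cap \setof{x \in V : \langle u, x\rangle \ge 0}},
\]
which is continuous on the compact sphere $S^{n-1}$ and hence attains a maximum at some $u_{0}$. The key computation is the first variation of $F$ under a rotation of the cutting hyperplane: for a tangent direction $\delta u \in u^{\perp} = H_{u}$, rotating $u$ toward $\delta u$ moves the hyperplane $H_{u}$ so that, to first order, it sweeps out signed volume $\langle \delta u, x\rangle$ per unit $(n-1)$-area at the point $x \in S_{u}$. The coarea formula then yields the directional derivative
\[
   D_{\delta u} F(u) = \int_{S_{u}} \langle \delta u, x\rangle \, d\mathcal{H}^{n-1}(x) = \langle \delta u, M(u)\rangle .
\]
In particular, the spherical gradient of $F$ is exactly the first-moment field $M$, so the critical points of $F$ are precisely the directions whose sections are centered at the origin.

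It then follows that the maximizer $u_{0}$ does the job: since $D_{\delta u} F(u_{0}) \le 0$ for every tangent direction $\delta u$, and since $D_{-\delta u} F(u_{0}) = -D_{\delta u} F(u_{0})$ by the linearity of the formula above, we get $\langle \delta u, M(u_{0})\rangle = 0$ for all $\delta u \in H_{u_{0}}$, whence $M(u_{0}) = 0$. Thus $p = 0$ is the centroid of the section $S \deftobe S_{u_{0}}$, as claimed. The step I expect to be the main obstacle is the rigorous justification of the first-variation formula for $D_{\delta u} F$ for an \emph{arbitrary} convex body, whose boundary need not be smooth. The hypothesis $p \in \intr K$ is what makes this manageable: it guarantees that the assignment $u \mapsto S_{u}$ is continuous in the Hausdorff metric with uniformly nondegenerate sections, so that the section volumes and first moments vary continuously and the swept-volume computation can be validated by dominated convergence rather than by any appeal to smoothness of $\bndr K$.
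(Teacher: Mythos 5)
The paper does not prove this statement at all: it is imported as a known classical result, cited to Steinhaus \cite{Ste1955} and Gr\"unbaum \cite{Grue1961}, and used as a black box in the proof of its cone analogue (Theorem \ref{thm:EveryInteriorPointInConeIsACentroid}). So there is no internal proof to compare yours against; what I can tell you is that your argument is correct, and it is essentially the classical variational proof of this fact: normalize $p = 0$, maximize the cap volume $F(u) = \operatorname{vol}_{n}\parens{K \cap \setof{x : \langle u, x \rangle \ge 0}}$ over the compact sphere, and identify the directional derivatives of $F$ with the first moment of the central section. The step you flag as the main obstacle is indeed the only place where real work is needed, and your plan for it goes through without smoothness of $\bndr K$: write $F(u(t)) - F(u)$, for $u(t) = (\cos t)\, u + (\sin t)\, \delta u$, as the difference of the volumes of the two wedges that $K$ occupies between $H_{u(t)}$ and $H_{u}$; in cylindrical coordinates about the rotation axis $u^{\perp} \cap (\delta u)^{\perp}$ each wedge volume is $\int$ of a slice function over an angular interval of length $t$, and dominated convergence (using only that $K$ is bounded and that $\bndr K \cap H_{u}$ has $(n-1)$-dimensional measure zero, both automatic since $K$ is a convex body with $0 \in \intr{K}$) gives the one-sided derivative $D_{\delta u}F(u) = \langle \delta u, M(u) \rangle$. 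That one-sided derivative at the single maximizer $u_{0}$ is all the argument requires: linearity in $\delta u$ upgrades ``$\le 0$ in every tangent direction'' to ``$= 0$'', and since $M(u_{0}) \in u_{0}^{\perp}$ you can test with $\delta u = M(u_{0})$ itself. Two small tightenings: you do not actually need Hausdorff continuity of $u \mapsto S_{u}$ or continuity of the first moments anywhere except implicitly in the continuity of $F$ (which follows directly from boundedness of $K$, since the symmetric difference of two nearby half-spaces meets $K$ in small volume); and the hypothesis $p \in \intr{K}$ is used at the end as well, since $\operatorname{vol}_{n-1}(S_{u_{0}}) > 0$ is what converts $M(u_{0}) = 0$ into the statement that $p$ is the centroid of $S_{u_{0}}$.
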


We will need the analogous result for cones, which we prove using
the above theorem together with a theorem due to Hammer
\cite{Ham1951} bounding the ratio in which the centroid of a
convex body can divide a chord of that body:

\begin{thm}[\cite{Ham1951}]\label{thm:HammersTheorem}
   Let $K$ be an $n$-dimensional convex body, and let $p$ be the
   centroid of $K$.  Then, for each chord $[x, y]$ of $K$ through
   $p$, the convex combination $p = (1 - \mu) x + \mu y$ satisfies
   $\frac{1}{n+1} \le \mu \le \frac{n}{n+1}$.
\end{thm}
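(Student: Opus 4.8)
The plan is to deduce both inequalities from a single statement about how high the centroid sits above a supporting hyperplane, and then to reduce that statement to a one-dimensional moment inequality via the concavity of parallel section volumes. Since the centroid and the ratio in which a point divides a chord are both affine invariants, I lose nothing by choosing convenient coordinates separately for each of the two bounds.

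First I would fix the chord $[x,y]$ through the centroid $p = (1-\mu)x + \mu y$ and choose a supporting hyperplane $H$ of $K$ at the endpoint $y$. Writing $u$ for the affine ``height'' coordinate that vanishes on $H$ and is nonnegative on $K$, I have $u(y) = 0$ and $u_x \deftobe u(x) > 0$ (the chord is not contained in $H$, since the interior point $p$ has $u(p) > 0$). Because $u$ is affine, $u(p) = (1-\mu)u_x$, and because $p$ is the centroid, $u(p)$ equals the average height $\bar u \deftobe \mathrm{vol}(K)^{-1}\int_K u$. Thus $\mu \le \frac{n}{n+1}$ is equivalent to $\bar u \ge \frac{u_x}{n+1}$. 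Running the identical construction with a supporting hyperplane at $x$ in place of $y$ yields the complementary bound $\mu \ge \frac{1}{n+1}$. Everything therefore reduces to one claim: \emph{if a convex body lies in $\setof{u \ge 0}$ and attains maximum height $L$, then its centroid height is at least $\frac{L}{n+1}$}; one then applies this with $L = \max_K u \ge u_x$.

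To prove the claim I would slice $K$ by the level hyperplanes $\setof{u = t}$ and let $A(t)$ be the $(n-1)$-volume of the slice. By the Brunn--Minkowski inequality, $\phi \deftobe A^{1/(n-1)}$ is a nonnegative concave function on $[0,L]$, and $\bar u = \parens{\int_0^L t\,\phi(t)^{n-1}\,dt}/\parens{\int_0^L \phi(t)^{n-1}\,dt}$. The extremal profile is the cone $\phi_0(t) = c(L-t)$, for which a direct computation gives centroid height $\frac{L}{n+1}$; I normalize $c$ so that $\int_0^L \phi_0^{n-1} = \int_0^L \phi^{n-1}$. The heart of the matter is a single-crossing comparison. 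Since $\phi$ is concave and $\phi_0$ is affine, $\phi - \phi_0$ is concave, and since $\phi_0(L) = 0 \le \phi(L)$ its value at the top endpoint is nonnegative; a concave function that is nonnegative at the right endpoint can change sign at most once, necessarily from $-$ to $+$. As $t \mapsto t^{n-1}$ is increasing on $[0,\infty)$, the difference $g \deftobe \phi^{n-1} - \phi_0^{n-1}$ inherits a single upward sign change at some $a \in [0,L]$, while $\int_0^L g = 0$ by the normalization. Then $(t-a)g(t) \ge 0$ for all $t$, so $\int_0^L t\,g(t)\,dt = \int_0^L (t-a)g(t)\,dt \ge 0$, which gives $\int_0^L t\,\phi^{n-1} \ge \int_0^L t\,\phi_0^{n-1}$ and hence $\bar u \ge \frac{L}{n+1}$.

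The main obstacle is exactly this one-dimensional inequality, and the crucial subtlety is the choice of comparison body. Comparing $\phi$ against an \emph{arbitrary} affine function of equal power-integral fails, because concave-minus-affine typically has sign pattern $-,+,-$, and the moment comparison then breaks down. Anchoring the comparison to the cone profile that vanishes at the top endpoint is precisely what forces a single sign change and makes the rearrangement argument succeed. A secondary point requiring care is the reduction: one must verify that the centroid height bound holds with $L$ the \emph{full} maximal height (so that $L \ge u_x$ is all that is used), and that $u_x > 0$, which follows from $p$ being an interior point off the supporting hyperplane.
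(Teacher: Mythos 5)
The paper does not actually prove this statement---it is quoted as a known result from Hammer's 1951 paper \cite{Ham1951}---so there is no internal proof to compare against. Your argument is correct and complete, and it is essentially the classical route to this bound: reduce each of the two inequalities, via a supporting hyperplane at a chord endpoint and the fact that affine functions commute with centroids, to the claim that a convex body in $\{u \ge 0\}$ of maximal height $L$ has centroid height at least $L/(n+1)$; then prove that claim by comparing the section profile $\phi = A^{1/(n-1)}$ (concave by Brunn's theorem) with the cone profile $\phi_0(t) = c(L-t)$ of equal volume, where anchoring $\phi_0$ to vanish at the top endpoint forces the concave difference $\phi - \phi_0$ to have a single sign change from $-$ to $+$, so that $\int_0^L (t-a)\,g(t)\,dt \ge 0$ yields the moment inequality. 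Your closing remark about why the comparison body must be the cone vanishing at $t = L$, rather than an arbitrary affine profile of equal integral, identifies exactly the point on which the rearrangement step depends. The only loose end is the degenerate case $n = 1$, where the exponent $1/(n-1)$ is meaningless; but there the statement is trivial, since the centroid of a segment is its midpoint and $\mu = 1/2$ satisfies both bounds with equality.
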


\begin{thm}
   \label{thm:EveryInteriorPointInConeIsACentroid}
   Let $C \subset V$ be an $n$-dimensional cone, and let $p \in
   \intr C$.  Then there exists a bounded section $S$ of $C$ such
   that $p$ is the centroid of $S$.
\end{thm}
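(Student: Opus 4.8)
The plan is to reduce the statement to Theorem~\ref{thm:EveryInteriorPointInBodyIsACentroid} by truncating the (unbounded) cone $C$ to a genuine convex body, and then to use Theorem~\ref{thm:HammersTheorem} to guarantee that the section produced by Steinhaus's theorem is in fact a bounded section of the \emph{full} cone $C$, rather than one that is cut off by the truncating hyperplane. Since $C$ is pointed, I would first fix a linear functional $\ell$ with $\ell \ge 0$ on $C$ and $\ell > 0$ on $C \setminus \setof{0}$; then $\setof{\ell = t} \cap C$ is compact for every $t > 0$, and we may normalize so that $\ell(p) = 1$. For a parameter $M > n$ (we will see that $M = n+1$ suffices) set $K \deftobe \setof{x \in C : \ell(x) \le M}$. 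This $K$ is an $n$-dimensional convex body containing $p$ in its interior.

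Applying Theorem~\ref{thm:EveryInteriorPointInBodyIsACentroid} to $K$ and $p$ produces a section $S = K \cap H$ of $K$, lying in an affine hyperplane $H$, whose centroid is $p$; since the centroid lies in $\aff(S) = H$, we have $p \in H$. It then remains to show that, for our choice of $M$, the set $S$ is actually a section of $C$, i.e.\ that $C \cap H = S$. The key estimate comes from applying Theorem~\ref{thm:HammersTheorem} to the $(n-1)$-dimensional body $S$, whose centroid is $p$: taking the chord of $S$ through $p$ whose endpoint $y$ maximizes $\ell$ on $S$, and writing $p = (1-\mu)x + \mu y$, we have $\mu \ge \frac{1}{n}$, so that $1 = \ell(p) = (1-\mu)\ell(x) + \mu\,\ell(y) \ge \mu\,\ell(y)$ (as $\ell(x) \ge 0$), and therefore $\ell(y) \le n$. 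Thus $\ell \le n < M$ everywhere on $S$, so $S$ does not meet the truncating cap $\setof{\ell = M}$.

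Finally I would argue that this forces $C \cap H = S$. If some $z \in C \cap H$ had $\ell(z) > M$, then the segment $[p,z]$ lies in the convex set $C \cap H$ and, by the intermediate value theorem, contains a point $w$ with $\ell(w) = M$; but then $w \in C \cap \setof{\ell \le M} \cap H = S$ with $\ell(w) = M > n$, contradicting the bound just established. Hence $C \cap H \subseteq \setof{\ell \le M}$, so that $C \cap H = K \cap H = S$ is a bounded $(n-1)$-dimensional section of $C$ with centroid $p$. I expect the main obstacle to be precisely this last point---ensuring that the Steinhaus section of the truncated body is not an artifact of the truncation---and Hammer's bound on how far a centroid can sit from the far boundary is exactly what rules this out; the trivial case $n = 1$ can be treated separately.
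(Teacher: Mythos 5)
Your proof is correct and takes essentially the same route as the paper's: truncate $C$ to a convex body $K$, apply Theorem~\ref{thm:EveryInteriorPointInBodyIsACentroid} to get a section with centroid $p$, and then use Theorem~\ref{thm:HammersTheorem} (applied to the $(n-1)$-dimensional section) to show the section cannot reach the truncating cap, so it is a genuine bounded section of $C$. The differences are only cosmetic: the paper truncates by a hyperplane placed beyond $C \cap (\lambda p - C)$ with $\lambda > n+1$ and derives the chord-ratio contradiction directly from a chord hitting $\bndr(\lambda p - C)$, whereas you truncate by a sublevel set of a strictly positive linear functional and bound that functional on the section, then finish with the same convexity/intermediate-value exclusion argument that the paper leaves implicit.
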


\begin{proof}
   Fix $\lambda > n+1$, and let $H$ be a hyperplane such that $C
   \cap H$ is bounded and $C \cap (\lambda p - C)$ lies in a
   closed half-space bounded by $H$.  Let $K$ be the intersection
   of $C$ with this closed half-space.  Since $K$ is a convex
   body, there exists a section $S$ of $K$ with centroid $p$ by
   Theorem \ref{thm:EveryInteriorPointInBodyIsACentroid}.

   We claim that $S$ does not intersect $C \cap H$.  Suppose
   otherwise, and let $[x,y]$ be a chord of $S$ through $p$ with
   $x \in \bndr C$ and $y \in C \cap H$.  Let $y' \deftobe x +
   \lambda(p - x)$.  Then $y' = \lambda p - (\lambda - 1) x \in
   \bndr(\lambda p - C)$ is the point where the ray from $x$
   through $p$ meets $\bndr(\lambda p - C)$.  Since $p = \parens{1
   - \frac{1}{\lambda}} x + \frac{1}{\lambda} y'$, and since $y'
   \in [x, y]$, we must have that $p = (1 - \mu)x + \mu y$ for
   some $\mu \le 1/\lambda < \frac{1}{n+1}$.  Therefore, by
   Theorem~\ref{thm:HammersTheorem}, $p$ is not a centroid of $S$,
   a contradiction.

   Since $S$ does not intersect $C \cap H$, it follows that $S$ is
   a bounded section of $C$ with centroid $p$, as desired.
\end{proof}

\begin{lem}
   \label{lem:IntersectionOfBoundariesIsBoundaryOfSection}
   Let $C$ be a cone with $\dim C \ge 2$.  If $p \in \intr C$ is
   the center of symmetry of a bounded section $S$ of $C$, then
   $\bndr S = \bndr C \cap \bndr(2p - C)$.
\end{lem}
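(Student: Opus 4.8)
The plan is to set $a \deftobe 2p$ and $\Gamma \deftobe \bndr C \cap \bndr(a - C)$, and to prove the two inclusions $\bndr S \subseteq \Gamma$ and $\Gamma \subseteq \bndr S$ separately. Write $H \deftobe \aff(S)$, so that $S = C \cap H$. Since the hyperplane $H$ meets $\intr C$, I will use the standard section--boundary identity $\bndr S = \bndr C \cap H$; its verification is routine (a point of $S$ lies in the relative interior of $S$ if and only if it lies in $\intr C$, as one sees by coning a relative neighborhood in $H$ toward an interior point of $C$ lying off $H$).

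The inclusion $\bndr S \subseteq \Gamma$ is immediate. Given $x \in \bndr S$, central symmetry of $S$ about $p$ sends $x$ to $a - x = 2p - x$, which again lies in $\bndr S$ and hence in $\bndr C$. Therefore $x \in a - \bndr C = \bndr(a - C)$, so $x \in \Gamma$.

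For the reverse inclusion $\Gamma \subseteq \bndr S$, which I expect to be the main obstacle, I would invoke Lemma~\ref{lem:BoundaryIntersectionsAreSpheres} to write each $y \in \Gamma$ as $y = c\,x$ for a unique $x \in \bndr S$ and a unique scalar $c$; since $y$ and $x$ both lie in $C \setminus \setof{0}$ (the origin is interior to $a - C$, so $y \neq 0$), we have $c > 0$. The whole point is to show $c = 1$. By the inclusion already proved, $x \in \Gamma$, so $a - x \in \bndr C$; and likewise $a - y = a - c\,x \in \bndr C$. Thus the line $\ell \deftobe \setof{a - r x : r \in \R}$, which passes through $a \in \intr C$ at $r = 0$, meets $\bndr C$ at both $r = 1$ and $r = c$. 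The key geometric fact is that a line through an interior point of a convex set crosses the boundary at most once on each side: if $c > 1$, then $a - x$ is a proper convex combination of the interior point $a$ and the point $a - c\,x \in C$, forcing $a - x \in \intr C$, a contradiction; the case $c < 1$ is symmetric. Hence $c = 1$ and $y = x \in \bndr S$, which completes the proof. I do not expect to need the explicit symmetry of $\Gamma$ from Lemma~\ref{lem:BoundaryIntersectionsAreCentrallySymmetric}; the symmetry hypothesis enters only through the symmetry of $S$ used in the forward inclusion.
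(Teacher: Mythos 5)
Your proof is correct, and it takes a genuinely different route from the paper's. Writing $\Gamma$ for $\bndr C \cap \bndr(2p - C)$, the paper argues by contradiction: if $\bndr S \neq \Gamma$, then Lemma~\ref{lem:BoundaryIntersectionsAreSpheres} yields $x \in \bndr S$ with $\mu x \in \Gamma$ for some $\mu \neq 1$; restricting to the $2$-plane $L$ spanned by $p$ and $x$, it uses central symmetry about $p$ of \emph{both} $\bndr S$ (the hypothesis) and $\Gamma$ (Lemma~\ref{lem:BoundaryIntersectionsAreCentrallySymmetric}), and then applies Lemma~\ref{lem:BoundaryIntersectionsAreSpheres} a second time, to the $2$-dimensional cone $C \cap L$, to conclude that $2p - \mu x$ must be a scalar multiple of $2p - x$, which is impossible when $p$ and $x$ are linearly independent and $\mu \neq 1$. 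You instead prove the two inclusions directly: the forward one from the symmetry of $S$ alone, and the reverse one from the scalar-multiple correspondence of Lemma~\ref{lem:BoundaryIntersectionsAreSpheres} combined with the accessibility lemma of convexity (a segment from an interior point of a closed convex set to a point of the set lies in the interior except possibly at its far endpoint), which is exactly what forces the line $r \mapsto 2p - rx$ to meet $\bndr C$ at most once for $r > 0$, hence $c = 1$. Your route is arguably leaner: it never uses Lemma~\ref{lem:BoundaryIntersectionsAreCentrallySymmetric}, never needs the planar restriction (which in the paper silently relies on identifications like $\bndr(C \cap L) = \bndr C \cap L$), and uses only the existence-and-uniqueness half of Lemma~\ref{lem:BoundaryIntersectionsAreSpheres}, not the homeomorphism. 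The price is that you must invoke two standard facts explicitly: the section--boundary identity $\bndr S = \bndr C \cap \aff(S)$ (needed for your forward inclusion, and available because $p \in \intr C$ lies on $\aff(S)$) and the accessibility lemma; both are routine, and your sketches of them are adequate. A small bonus of your structure: the reverse inclusion shows, more generally, that $\Gamma$ cannot properly contain the boundary of any bounded section, with the symmetry hypothesis entering only through the forward inclusion.
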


\begin{proof}
   Let $\Gamma \deftobe \bndr C \cap \bndr(2p - C)$.  Suppose that
   $\bndr S \ne \Gamma$.  Then, by Lemma
   \ref{lem:BoundaryIntersectionsAreSpheres}, there is a point $x
   \in \bndr S$ such that $\mu x \in \Gamma$ for some $\mu \ne 1$.
   Let $L$ be the $2$-dimensional linear span of $p$ and $x$.  By
   Lemma \ref{lem:BoundaryIntersectionsAreCentrallySymmetric},
   $\bndr S$ and $\Gamma$ are both centrally symmetric about $p$.
   Inversion through $p$ fixes $L$, so we must have that $2p - x
   \in \bndr S \cap L$ and $2p - \mu x \subset \Gamma \cap L$.  By
   applying Lemma \ref{lem:BoundaryIntersectionsAreSpheres} to the
   $2$-dimensional cone $C \cap L$, we find that $2p - x$ and $2p
   - \mu x$ must be scalar multiples of each other.  However, this
   is evidently not the case, since $p$ and $x$ form a basis for
   $L$ and $\mu \ne 1$.  Thus, $\bndr S = \Gamma$, as claimed.
\end{proof}

It is now straightforward to prove Theorem
\ref{thm:CSSimpliesEllipsoidal} as a corollary of the above
results.

\begin{proof}[Proof of Theorem \ref{thm:CSSimpliesEllipsoidal}
   (stated on p.~\pageref{thm:CSSimpliesEllipsoidal})]
   \label{proof:FalseCenterTheoremForCones}
   Suppose that $C$ is a finite-dimensional CSS cone.  Let a point
   $a \in \intr C$ be given, and let $p \deftobe \frac{1}{2}a$.
   By Theorem \ref{thm:EveryInteriorPointInConeIsACentroid}, there
   is a bounded section $S$ of $C$ with centroid $p$.  By
   hypothesis, $S$ is centrally symmetric.  The centroid of a
   centrally symmetric convex body is the center of symmetry of
   the body, so $p$ is the center of symmetry of $S$.  By
   Lemma~\ref{lem:IntersectionOfBoundariesIsBoundaryOfSection},
   $\bndr C \cap \bndr(a - C) = \bndr S \subset \aff(S)$, which
   implies that $C$ is an FBI cone.  Therefore, by
   Theorem~\ref{thm:FBIimpliesEllipsoidal}, $C$ is ellipsoidal.
\end{proof}

% \bibliography{Bibliography}
% \bibliographystyle{amsplain-fi-arxlast}

\providecommand{\bysame}{\leavevmode\hbox to3em{\hrulefill}\thinspace}
\providecommand{\MR}{\relax\ifhmode\unskip\space\fi MR }
% \MRhref is called by the amsart/book/proc definition of \MR.
\providecommand{\MRhref}[2]{%
  \href{http://www.ams.org/mathscinet-getitem?mr=#1}{#2}
}
\providecommand{\href}[2]{#2}

\end{document}